\newtheorem{lem}{Lemma}[section]
\newtheorem{theorem}[lem]{Theorem}
\newtheorem{defi}[lem]{Definition}
\newtheorem{rema}[lem]{Remark}
\numberwithin{equation}{section}
\def\epsilon{\varepsilon}
\DeclareMathOperator{\Vol}{Vol}
\DeclareMathOperator{\supp}{supp}
\title[]
{On a new functional for extremal metrics of the conformal laplacian in high dimension }
\begin{document}
\begin{abstract}
In this paper, we introduce a new functional for the conformal spectrum of the conformal laplacian on a closed manifold $M$ of dimension at least $3$. For this new functional we provide a Korevaar type result. The main body of the paper deals with the case of the sphere but a section is devoted to more general closed manifolds. 
\end{abstract}

%\date{\today}

\author [Sire]{Yannick Sire}
\address{Department of Mathematics, Johns Hopkins University, Baltimore, MA 21218, USA} \email{\href{mailto:sire@math.jhu.edu}{sire@math.jhu.edu}}

\author [Xu]{Hang Xu}
\address{Department of Mathematics, University of California, San Diego, 9500 Gilman Dr, La Jolla, CA 92093 , USA}
\email{\href{mailto:h9xu@ucsd.edu}{h9xu@ucsd.edu }}

\maketitle
\tableofcontents

\section{Introduction}
Let $(M^n,g)$ with $n \geq 3$ be a compact boundaryless manifold with scalar curvature $R_g$. Let $[g]$ be the conformal class of $g$ and let $c_n$ denote the constant $\frac{n-2}{4(n-1)}$. Consider the conformal laplacian 
\begin{equation*}
	\Box_g=-\Delta_g +c_nR_g.
\end{equation*}

Let $\widetilde g$ be a conformal metric to $g$,  i.e. $\widetilde g= \mu^{4/(n-2)} g$ for some positive function $\mu\in C^{\infty}(M)$. We investigate the eigenvalue problem 
\begin{equation}\label{eigen}
(-\Delta_{\widetilde g}+c_nR_{\widetilde g})u=\lambda u 
\end{equation}

Suppose $(S^n,g)$ is the $n$-sphere with the round metric and $\widetilde{g}\in [g]$.
Recall that a celebrated result by Korevaar \cite{Korevaarupperbounds} implies that the $k$th eigenvalue $\lambda_k(S^n,-\Delta_{\widetilde{g}})$ of the laplacian satisfies
\begin{equation}\label{Korevaar}
\lambda_k(S^n,-\Delta_{\widetilde{g}})\cdot \Vol(S^n,\widetilde{g})^{2/n}\leq C(n)k^{2/n}.
\end{equation}
However, the same estimate is false for the conformal laplacian \cite{conformaleigenvalueunbounded}. By changing the quantity $\Vol(S^n,\widetilde{g})^{2/n}=\left(\int_{S^n}\mu^{\frac{2n}{n-2}}dV_g\right)^{2/n}$ to a smaller one in the sense of Lebesgue space embeddings, we have 
\begin{theorem}\label{main}
	Let $(S^n,g)$ be the $n$-sphere with the round metric. For any metric $\widetilde{g}\in [g]$, the $k$th eigenvalue  of the conformal laplacian $\Box_{\widetilde{g}}$ satisfies the inequality
	\begin{equation*}
	\lambda_k(S^n,\Box_{\widetilde{g}})\int_{S^n}\mu^{\frac{4}{n-2}}dV_g\leq C(n)k^{2/n},
	\end{equation*}
	where $C(n)$ is a constant only depending on $n$.
\end{theorem}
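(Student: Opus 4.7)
The plan is to use the conformal invariance of $\Box$ to reduce the problem to a weighted Rayleigh quotient on the round sphere, and then apply a Korevaar / Grigoryan--Netrusov--Yau style capacitor decomposition adapted to the weight $\mu^{4/(n-2)}\,dV_g$.

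First, the standard conformal identity $\Box_g(\mu v) = \mu^{(n+2)/(n-2)}\Box_{\widetilde g}v$ together with the substitution $u=\mu v$ rewrites the Rayleigh quotient for $\widetilde\lambda_k$ as
\[
R(u)=\frac{\int_{S^n}\bigl(|\nabla_g u|^2+c_nR_g u^2\bigr)\,dV_g}{\int_{S^n} u^2\,\mu^{4/(n-2)}\,dV_g}.
\]
By the min--max principle, proving the theorem then reduces to producing $k+1$ (in the natural convention) test functions $\phi_0,\ldots,\phi_k$ with pairwise disjoint supports such that $R(\phi_i)\le C(n)\,k^{2/n}\bigl/\!\int_{S^n}\mu^{4/(n-2)}\,dV_g$ for every $i$, since the usual disjoint-support computation shows that any linear combination inherits the same bound.

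Second, I would apply a GNY--type decomposition to the finite Borel measure $d\nu=\mu^{4/(n-2)}\,dV_g$ on the doubling space $(S^n,g)$. For any prescribed integer $N$ this yields $N$ metric balls $B(x_i,r_i)$ whose doubles $B(x_i,2r_i)$ are pairwise disjoint and satisfy $\nu(B(x_i,r_i))\ge c(n)\,\nu(S^n)/N$. Taking $N$ proportional to $k$, disjointness of the doubled balls on the unit-volume round sphere forces $\sum_i r_i^n\le C(n)$, and a pigeonhole argument then retains at least $k+1$ of these balls for which additionally $r_i\le C(n)\,k^{-1/n}$. For each retained ball I take $\phi_i$ to be the standard Lipschitz cutoff equal to $1$ on $B(x_i,r_i)$, vanishing outside $B(x_i,2r_i)$, and linear in the geodesic distance in between.

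Plugging in, the denominator of $R(\phi_i)$ is bounded below by $\nu(B(x_i,r_i))\ge c(n)\,\nu(S^n)/k$. For the numerator, $|\nabla_g\phi_i|\le C/r_i$ on an annular region of $dV_g$-volume at most $C\,r_i^n$, which yields $\int|\nabla_g\phi_i|^2\,dV_g\le C\,r_i^{n-2}$; since $R_g$ is a positive constant on the round sphere, the potential term contributes at most $C\,r_i^n\le C\,r_i^{n-2}$ (recall $r_i\le\pi$). Combining, $R(\phi_i)\le C\,r_i^{n-2}\cdot k/\nu(S^n)\le C(n)\,k^{2/n}/\nu(S^n)$, which is the claimed estimate.

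I expect the main technical obstacle to be the capacitor decomposition: one must produce $\Omega(k)$ disjoint balls that \emph{simultaneously} carry a proportional share of the weighted measure $\nu$ and have radii of order $k^{-1/n}$. Either property alone is routine, but coupling the two in the weighted setting is the delicate ingredient supplied by a careful Korevaar / GNY covering argument. Once that combinatorial--geometric lemma is in hand, the remaining Rayleigh-quotient estimates are elementary, and the small-$k$ regime (where $r_i$ need not be small) involves only finitely many eigenvalues and can be absorbed into the constant $C(n)$.
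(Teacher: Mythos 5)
Your high-level skeleton matches the paper: conformally transplant the Rayleigh quotient to the round sphere via $\Box_g(\mu v)=\mu^{(n+2)/(n-2)}\Box_{\widetilde g}v$, decompose the weighted measure $\mu^{4/(n-2)}dV_g$ into $\sim k$ disjointly supported capacitors on $(S^n,g)$, and plug cutoffs into the min--max principle. However, there is a genuine gap in the combinatorial step. The Grigoryan--Netrusov--Yau theorem does \emph{not} give you metric balls; it gives annuli $A(p_j;r_j,R_j)$ whose doublings $A(p_j;r_j/2,2R_j)$ are disjoint, and for good reason: since $\mu$ is an arbitrary positive function, the measure $\mu^{4/(n-2)}dV_g$ can concentrate near a single point (think of a conformal bubble), and then no family of $\Omega(k)$ disjoint \emph{balls} can each carry a $\sim 1/k$ share of the mass --- one needs nested annuli around the concentration point. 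Your pigeonhole step to force radii $\lesssim k^{-1/n}$ relies essentially on the ball picture (radius controls the $g$-volume), and it does not survive the passage to annuli: a thin annulus $A(p;r/2,2R)$ can have small $g$-volume while $R$ is of order $1$, so disjointness of the doubled annuli bounds $\sum_j \nu_g(2A_j)$ but tells you nothing about the outer radii $R_j$. Consequently your direct estimate $\int|\nabla\phi_j|^2\,dV_g\lesssim r_j^{n-2}\lesssim k^{2/n-1}$ is not available.

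The paper avoids this entirely by a different mechanism: it applies GNY with $2k$ annuli, keeps the $k$ of them with the smallest $g$-volume (so $\nu_g(2A_j)\le \nu_g(S^n)/k$ by disjointness), and bounds the Dirichlet energy via H\"older,
\[
\int_{S^n}|\nabla\varphi_j|_g^2\,dV_g \le \left(\int_{S^n}|\nabla\varphi_j|_g^n\,dV_g\right)^{2/n}\nu_g(2A_j)^{1-2/n},
\]
where the first factor is a \emph{conformally invariant} quantity bounded by a dimensional constant for the cutoffs (the paper builds them from spherical conformal maps; in fact a careful linear cutoff also has $\int|\nabla\varphi|^n\lesssim 1$ on any annulus). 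This step is what lets the argument work uniformly over arbitrarily thin or large annuli and is the missing ingredient in your write-up. If you replace "balls of radius $\lesssim k^{-1/n}$" by "annuli of small $g$-volume" and replace your pointwise $r_i^{n-2}$ bound by this H\"older/$L^n$-gradient estimate, your argument would close.
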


In order to put the previous result in perspective, we introduce the quantity
$$
\Lambda_{k,p}(\widetilde{g})=\lambda_k(S^n, \Box_{\widetilde{g}}) \,\|\widetilde{g}\|_p,
$$
where 
$$\|\widetilde{g}\|_p = \Big ( \int_{S^n} \mu ^{\frac{4p}{n-2}}  dV_g\Big )^{1/p}$$ 
for $p \geq 1.$

Therefore, \eqref{Korevaar} can be viewed as {\sl universal bounds} on $\Lambda_{k,n/2}(\widetilde{g})$ for the {\sl Laplace} eigenvalues. Our result, for the conformal Laplacian eigenvalues, can then be formulated as 
$$
\Lambda_{k,1}(\widetilde{g}) \leq C(n) k^{2/n}.
$$
We would like to emphasize that the functional under consideration in the previous Theorem is {\sl not} invariant under a conformal change of the metric and appears to be new in the literature. Despite its lack of conformal invariance, in Section \ref{critical point}, we show that (smooth) critical points of  $\Lambda_{k,1}(\widetilde{g})$ satisfy an Euler-Lagrange equation, which is, once suitably formulated, completely analogous to the one of appearing in the surface case. This aspect, which allows also to give a PDE insight on the counter-example of Amman and Jammes, shows that the functional under consideration {\sl is} in fact the natural one for higher dimensions. 

The proof of Theorem \ref{main} follows the approach from \cite{KokarevConformal,KokarevKahler}, in which the idea traces back to \cite{annuluscovering} and \cite{Korevaarupperbounds}, i.e. by considering suitable test functions on annuli for the functional under consideration. Using more directly the approach of Korevaar in \cite{Korevaarupperbounds} we believe we could also obtain the desired upper bounds by a simpler argument. However, the approach we took is somehow more general as the work of Kokarev deals with test functions for eigenvalues of the laplacian on K\"ahler manifolds in a fixed K\"ahler class and we believe that, working it out in a simpler Riemannian case like the one we consider, could be useful to deal with other types of spectral functionals. 

\subsection*{Some remarks about the existence or no-existence of extremal metrics} 
Since the spectral functional $\Lambda_{k,1}(\widetilde{g})$ is bounded above universally it is a very natural question to consider its supremum on all possible conformal metrics. More generally, one could consider a family of functionals $\Lambda_{k,p}(\widetilde{g})$ with $1 \leq p \leq \frac{n}{2} $ for either the Laplacian or the conformal laplacian. We briefly describe below the state of the art for several of these functionals. 
In the case that of the spectral functional $\Lambda_{k,n/2}(\widetilde{g})$ for the Laplacian, the case for $n=2$ is completely understood thanks to the recent important work \cite{KNPP} (see also \cite{N3}). For $n \geq 3$, it is known that the first eigenvalue as a function fo the metric is unbounded if one does not fix a conformal class  by a result of Urakawa \cite{urakawa}. In the case $n=2$ and for surfaces different from the sphere, we refer the reader to \cite{N1,N2,petrides1,petrides2}. Whenever the case of the Conformal Laplacian is concerned, we already mentioned that the functional is unbounded by \cite{conformaleigenvalueunbounded}. Turning now to the case $1 \leq p<\frac{n}{2}$, the lack of the conformal invariance makes the problem of extremal metrics geometrically harder.  In Section \ref{critical point} , we derive equations of Schr\"odinger type which are the natural {\sl critical point} of the previous functional. The existence of an extremal metric for a given eigenvalue and its regularity is a hard problem (in particular the regularity aspects (see \cite{N1,N2,petrides1,petrides2}). To put our problem in a slightly wider context, one can consider the space $\mathcal M (S^n)$ the space of conformal structures on $S^n$ for $n \geq 3$. We recall also that there is only one conformal structure on $S^2$. Therefore, extremal metrics (and in particular suprema) are critical points of the eigenvalue functional defined on $\mathcal M(S^n)$ restricted to the $L^p$ unit sphere of $\mathcal M(S^n)$ for $1 \leq p <n/2$. Within this formulation one could think to apply min-max arguments (see \cite{KS} for related results).

Whenever the spectral functional is conformally invariant, extremal metrics and their associated eigen-elements constitute the conformal spectrum of the operator. It plays an important  role in the geometry of minimal submanifolds. An extensive program initiated by S. T. Yau and collaborators (see \cite{LiYau,YangYau} for instance) has been instrumental in these aspects. The existence of extremal metrics on general surfaces has been proved by Nadirashvili and the first author \cite{N1,N2} (see also \cite{nadir}). 

The formulation of the problem of extremal metrics for our functional as described in Section \ref{critical point} opens the possibility to investigate existence and regularity of extremal metrics as developed in \cite{N1,N2} for instance. Indeed, the strategy implemented in \cite{N1} uses the fact that the problem is two-dimensional only at the very last step. This part of the program is under investigation \cite{SX}.

\section{Critical points of the spectral functional}\label{critical point}
We derive the Euler-Lagrange equation of the spectral functional under consideration on the space of conformal metrics. The aim is to have a better insight on the PDE aspects of the problem. 
Let $(M^n,g)$ with $n \geq 3$ be a closed manifold with scalar curvature $R_g$. Let $[g]$ be the conformal class of $g$ and consider the conformal laplacian for $\widetilde g \in [g]$
$$
\Box_{\widetilde g}=-c_n\Delta_{\widetilde g} +R_{\widetilde g}
$$ 

We set $\tilde g= \mu^{\frac{4}{n-2}} g$ and we consider the eigenvalue problem 
\begin{equation}\label{eigenApp}
(-\Delta_{\tilde g}+R_{\tilde g})u=\lambda u 
\end{equation}
We have 
\begin{lem}
The eigenvalue problem \eqref{eigen} is equivalent to the following eigenvalue problem 
\begin{equation}\label{eigenConf}
(-\Delta_{g}+R_{ g})v=\lambda \mu^\frac{4}{n-2} v 
\end{equation}
\end{lem}
\begin{proof}
The conformal law for the conformal laplacian gives for any $f \in C^\infty(M)$ (say)  and metrics related by $\tilde g= \mu^\frac{4}{n-2} g$
$$
(-\Delta_{g}+R_{g})(\mu \, f)=\mu^\frac{n+2}{n-2}\, (-\Delta_{\tilde g}+R_{\tilde g})(f). 
$$
Now take $f \equiv u$ where $u$ satisfies \eqref{eigenApp}. This gives 
$$
(-\Delta_{g}+R_{g})(\mu \, u)=\lambda \mu^\frac{n+2}{n-2}\, u
$$
Setting $v=\mu\, u$ gives the result. 
\end{proof}
The spectral problem in Theorem \ref{main} then amounts to solve in $(\lambda, v, \mu)$ the following system
\begin{equation}\label{crit}
\left \{
\begin{array}{c}
(-\Delta_{g}+R_{ g})v=\lambda \mu^\frac{4}{n-2} v,\\
\int_{M} \mu^\frac{4}{n-2}dV_g=1
\end{array} \right .
\end{equation}

Introducing the new potential $\mathcal V(x)=\mu^\frac{4}{n-2}(x)$, we end up with the system 
\begin{equation}\label{crit2}
\left \{
\begin{array}{c}
(-\Delta_{g}+R_{ g})v=\lambda \mathcal V v,\\
\int_{M} \mathcal VdV_g=1
\end{array} \right .
\end{equation}
This system is completely analogous to the one on surfaces derived in \cite{N1}.  On the other hand, in the case of the Ammann-Jammes functional \cite{conformaleigenvalueunbounded}, i.e. whenever one normalizes by the volume, one obtains 
\begin{equation}\label{pbAmmann}
\left \{
\begin{array}{c}
(-\Delta_{g}+R_{ g})\, v=\lambda \mathcal V(x) v\,\,\,\mbox{on}\,\,M \\
\int_M \mathcal V^{n/2}dv_g=1.  
\end{array}\right .
\end{equation}

Notice that in this case the potential $\mathcal V$ giving rise to the extremal metric appears as a {\sl critical } norm $L^{n/2}$ constraint in dimension $n$. However, with the normalization we choose, the constraint is {\sl subcritical}.

\section{Geometry of metric measure space}\label{section metric space}
In this section, we will show the existence a large number of disjoint annuli carrying a sufficient amount of the volume measure. 
Let $(X,d)$ be a metric space. For any $p\in X$ and $0\leq r<R<+\infty$, we denote the ball $B(p,r)=\{x\in X: d(x,p)<r\}$ and the annulus $A(p;r,R)=\{x\in X: r\leq d(x,p)<R\}$. Recall the following definition.
\begin{defi}\label{N-covering}
	Given a positive integer $N$, we say that a metric space $(X,d)$ satisfies $N$-covering property if for any ball $B(p,r)$ in $X$, there exists a family of at most $N$ balls of radii $r/2$, which cover $B(p,r)$.
\end{defi}

The main tool we will use to construct disjoint annuli is the following theorem by Grigoryan, Netrusov and Yau \cite{annuluscovering} (see Theorem 3.5 and Lemma 3.10).
\begin{theorem}[\cite{annuluscovering}]\label{covering theorem}
	Let $(X,d)$ be a metric space and $m$ be a Borel measure. Assume that
	\begin{itemize}
		\item $(X,d)$ satisfies $N$-covering property;
		\item all balls in $X$ are precompact;
		\item measure $m$ is finite and non-atomic.
	\end{itemize} 
	Then for any positive integer $k$, there exists annuli $A_j=A(p_j; r_j, R_j)$ for $1\leq j\leq k$ such that
	\begin{itemize}
		\item $2A_j=A_j(p_j; \frac{r_j}{2}, 2R_j)$ for $1\leq j \leq k$ are pairly disjoint;
		\item for each $1\leq j\leq k$, $m(A_j)\geq c\frac{m(X)}{k}$. 
	\end{itemize} 
	Here $c=c(N)$ is a constant only depending on $N$.
\end{theorem}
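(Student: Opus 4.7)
The plan is to prove Theorem~\ref{covering theorem} by a dichotomy argument on a hierarchy of nested balls generated by the $N$-covering property, following the strategy of Grigor'yan--Netrusov--Yau. The key geometric observation is that the dilation $2A$ of an annulus $A(p;r,R)$ expands the outer radius by a factor of $2$ and shrinks the inner radius by the same factor, so two concentric annuli with radii differing by at least $4$ automatically have disjoint $2$-dilates, and two annuli sitting inside disjoint host balls also have disjoint $2$-dilates provided each is separated from the boundary of its host.

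First I would normalize $m(X)=1$, fix a scale ratio $\rho=4$ and a mass threshold $\eta=c_0(N)/k$ to be chosen. Starting from a single active ball containing $X$, I would run an inductive extraction scheme. At each step I examine an active ball $B(p,R)$ with $m(B(p,R))\geq\eta$ and apply the $N$-covering property iteratively: at level $j$, $B(p,R)$ is covered by at most $N^j$ balls of radius $R/2^j$. I would split into two cases. \emph{Concentrated case:} some sub-ball $B(q,R/2^j)$ with $2^j\geq\rho$ carries nearly all the mass of $B(p,R)$. Then the annulus $A=A(q;R/2^j,R)$ has mass at least $c(N)m(B(p,R))$ and its $2$-dilate lies inside $B(p,2R)$, so I record $A$ and continue the descent with the sub-ball $B(q,R/2^j)$. \emph{Diffuse case:} the mass of $B(p,R)$ splits across several pairwise disjoint sub-balls at a level $j<\log_2\rho$. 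In this case I replace $B(p,R)$ by those sub-balls as new active balls, whose number is controlled by $N^{\log_2\rho}$. Iterating produces a family of annuli organized along descending chains of active balls.

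The main obstacle is the bookkeeping needed to ensure that at least $k$ annuli are extracted with uniformly large mass and that their $2$-dilates are pairwise disjoint. For the mass balance I would need a quantitative lemma showing that the total mass carried by currently active balls does not drop below a definite fraction of $m(X)$ after each step; here non-atomicity of $m$ enters to rule out the mass being locked at a single point, and precompactness of balls keeps the stopping arguments finite. For disjointness, annuli recorded along a single descending chain are by construction $\rho$-separated in scale and hence have disjoint $2$-dilates, while annuli taken from distinct active ancestors have disjoint host balls and the $2$-dilate of each one remains inside its host, so disjointness propagates. Combining these two points, after $O(k)$ iterations one obtains $k$ annuli with $m(A_j)\geq c(N)m(X)/k$. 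This is precisely the content of Theorem 3.5 and Lemma 3.10 of~\cite{annuluscovering}, and the dependence of $c$ on $N$ comes from the geometric factor $N^{\log_2\rho}$ absorbed in each diffuse step.
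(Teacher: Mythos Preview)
The paper does not prove this theorem at all: it is quoted verbatim from Grigor'yan--Netrusov--Yau \cite{annuluscovering} (their Theorem~3.5 and Lemma~3.10) and used as a black box, with no argument supplied. So there is no ``paper's own proof'' to compare your proposal against.

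Your outline is a faithful high-level summary of the GNY dichotomy scheme, and the two disjointness mechanisms you isolate (scale separation along a chain, and containment in disjoint host balls across chains) are exactly the right ones. That said, as written it is a sketch rather than a proof: the ``concentrated case'' as you state it does not quite yield the annulus mass bound you claim (if \emph{all} the mass sits in the sub-ball then the annulus has small mass, not large mass; the actual GNY argument records an annulus when the mass \emph{fails} to concentrate below a threshold, i.e.\ at the first scale where a definite fraction escapes the best sub-ball), and the ``mass does not drop'' bookkeeping lemma you allude to is where the real work lies. If you intend to include a self-contained proof you will need to state and prove that lemma precisely; otherwise, citing \cite{annuluscovering} as the paper does is entirely standard.
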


In our case, we will set metric space $(X, d)=(S^n, d_g)$, where $d_g$ is the distance function induced from the round metric $g$. Since $(S^n,g)$ is positively curved, it satisfies the \emph{doubling property}. That is to say, there exists some constant $C(n)$, such that $\Vol_g(B(p,2r))\leq C(n)\Vol_g(B(p,r))$ for any ball $B(p,r)\subset S^n$. In fact, we can take $C(n)=2^n$ for $(S^n,d_g)$ by using the volume comparison theorem. And it follows that $(S^n,d_g)$ satisfies $N$-covering property for any $N\geq 4^n$ by using a packing argument. In particular, the constant $c$ here only depends on the dimension $n$ if we set $N=4^n$.

Let $\nu, \widetilde{\nu}$ be the volume measure induced by the metric $g,\widetilde{g}$ respectively. Furthermore, we will set the Borel measure 
\begin{equation}
	m=\mu^{-2}d\widetilde{\nu}=\mu^{\frac{4}{n-2}}d\nu.
\end{equation}
Applying Theorem \ref{covering theorem} to $(S^n,d_g,\mu^{\frac{4}{n-2}}d\nu)$ and positive integer $2k$, we obtain a collection of annuli $\{A_j=A(p_j; r_j, R_j)\}_{j=1}^{2k}$ such that
\begin{enumerate}\label{covering property}
	\item[(i)] $2A_j=A_j(p_j; \frac{r_j}{2}, 2R_j)$ for $1\leq j \leq 2k$ are pairly disjoint;
	\item[(ii)] for each $1\leq j\leq 2k$, 
	\begin{equation}\label{covering property equation}
		\int_{A_j}\mu^{\frac{4}{n-2}}d\nu\geq \frac{c}{k}\int_{S^n}\mu^{\frac{4}{n-2}}d\nu=\frac{c}{k}m(S^n),
	\end{equation}
	where $c=c(n)$ is a constant only depending on $n$.
\end{enumerate}

Furthermore, we can assume the first $k$ many annuli in the collection $\{A_j\}_{j=1}^{2k}$ satisfy 
\begin{equation}\label{annulus measure upper bound}
{\nu}(2A_j)\leq \frac{{\nu}(S^n)}{k}, \quad \mbox{for } 1\leq j\leq k.
\end{equation}
This is because $\{2A_j\}_{j=1}^{2k}$ are pairly disjoint and thus we have
\begin{equation*}
\sum_{j=1}^{2k} {\nu}(2A_j)\leq {\nu}(S^n). 
\end{equation*}
By re-indexing the collection of annuli, we can assume ${\nu}(2A_1)\leq {\nu}(2A_2)\leq \cdots \leq {\nu}(2A_{2k})$ and $\eqref{annulus measure upper bound}$ clearly follows.

\section{construction of test functions}
In this section, we will construct test functions supported in each annulus $2A_j$ for $1\leq j\leq k$.

Fix an annulus $A=A(p; r, R)\subset S^n$. Let $x=(x^0,x^1,\cdots,x^{n})\in \mathbb{R}^{n+1}$ be the Cartesian coordinates. Let $\sigma_{p}: S^n\setminus\{p\}\rightarrow \mathbb{R}^n$ be the stereographic projection with the base point $p$ to the subspace
\begin{equation*}
	L_p=\{x\in \mathbb{R}^{n+1}: x\cdot p=0\}.
\end{equation*}
For any $t>0$, we denote $\delta_t:\mathbb{R}^n\rightarrow\mathbb{R}^n$ as the rescaling map by factor $t$, i.e., $\delta_t(y)=ty$ for any $y\in \mathbb{R}^n$. Define
\begin{equation*}
\theta_{p,t}=\sigma_{p}^{-1}\circ \delta_{t} \circ \sigma_{p},
\end{equation*}   
which is a conformal diffeomorphism of $(S^n,g)$, since each map in the composition is conformal. It is not hard to see the following properties of $\theta_{p,t}$ are satisfied.
\begin{itemize}
	\item $\theta_{p,t}$ fixes the points $\pm p$.
	\item $\theta_{p,t}$ preserves level sets of the distance function $d_g(p,\cdot)$.
	\item For any $x\in S^n\setminus\{\pm p\}$, $\theta_{p,t}(x)\rightarrow p$ as $t\rightarrow+\infty$, while $\theta_{p,t}(x)\rightarrow -p$ as $t\rightarrow 0$.
\end{itemize}

We start the construction with a special case that $r=0$ and $A(p;r,R)=B(p,R)$. Given $R\in (0,\frac{\pi}{2})$, we can choose the value $t=t(R)\in \mathbb{R}^+$ so that $\theta_{p,t}$ maps $B(p,2R)$ to the hemisphere $B(p,\frac{\pi}{2})$.
Let $x_p=x\cdot p$. We define
\begin{equation*}
	\varphi_{p,R}(x)=
	\begin{cases}
	x_p\circ \theta_{p,t}(x), & \quad \mbox{ if } x\in B(p,2R),\\
	0, & \quad \mbox{ if } x\not\in B(p,2R).
	\end{cases}
\end{equation*}
Let $\nabla$ be the Levi-Civita connection for the round metric $g$. Then we have the following lemma on the function $\varphi_{p,R}$.
\begin{lem}\label{test function on ball}
	$\varphi_{p,R}$ is a Lipschitz function on $S^n$ satisfying
\begin{itemize}
	\item $0\leq \varphi_{p,R}(x)\leq 1$ for any $x\in S^n$;
	\item $\varphi_{p,R}(x)\geq \frac{3}{5}$ for any $x\in B(p,R)$;
	\item $\supp \varphi_{p,R}\subset B(p,2R)$;
	\item 
	\begin{equation*}
		\int_{S^n}|\nabla \varphi_{p,R}|^n_gd\nu\leq C(n),
	\end{equation*}
	where $C(n)$ is a constant only depending on $n$.
\end{itemize}
\end{lem}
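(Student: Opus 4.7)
The plan is to exploit two special features of the round sphere: the coordinate function $x_p(\cdot)=\langle\cdot,p\rangle$ is smooth on $S^n$ with $|x_p|\le 1$ and vanishes precisely on the equator $\partial B(p,\pi/2)$; and the M\"obius family $\theta_{p,t}$ can be tuned to push any geodesic ball $B(p,2R)$ diffeomorphically onto the hemisphere $B(p,\pi/2)$. The first three bullets will fall out of a direct computation with $\theta_{p,t}$, while the gradient bound will come for free from the conformal invariance of the $L^n$ Dirichlet energy in dimension $n$.

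First I would work out how $\theta_{p,t}$ moves the level sets of $d_g(p,\cdot)$. In stereographic coordinates centered at $p$ one has $|\sigma_p(q)|=\cot(d_g(p,q)/2)$, and $\theta_{p,t}$ becomes the dilation $y\mapsto ty$, so $\rho=d_g(p,\theta_{p,t}(x))$ and $\psi=d_g(p,x)$ are related by $\cot(\rho/2)=t\cot(\psi/2)$. The requirement $\theta_{p,t}(\partial B(p,2R))=\partial B(p,\pi/2)$ then forces $t=\tan R$; with this choice, $\theta_{p,t}$ sends $B(p,R)$ onto a ball $B(p,\rho_0)$ with $\tan(\rho_0/2)=\cos R/(1+\cos R)\le 1/2$, so $\cos\rho_0\ge 3/5$ and hence $\varphi_{p,R}\ge 3/5$ on $B(p,R)$. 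The bound $0\le\varphi_{p,R}\le 1$ and the support statement are immediate from the definition, and Lipschitz regularity follows because $x_p\equiv 0$ on $\partial B(p,\pi/2)$: the zero extension across $\partial B(p,2R)$ then matches continuously (in fact, Lipschitz) with the smooth piece inside.

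The gradient estimate, which is the whole point of the construction, rests on the conformal invariance of $u\mapsto \int_{S^n}|\nabla u|_g^n\,dV_g$ in dimension $n$: if $\phi^*g=\lambda^2 g$ then $|\nabla(u\circ\phi)|_g=\lambda\,(|\nabla u|_g\circ\phi)$ while $dV_g=\lambda^{-n}\phi^*dV_g$, and the two factors $\lambda^{\pm n}$ cancel under the integral. Applying this with $\phi=\theta_{p,t}$ and $u=x_p$, and observing that $\theta_{p,t}$ restricts to a diffeomorphism of $B(p,2R)$ onto $B(p,\pi/2)$, I obtain
\begin{equation*}
\int_{S^n}|\nabla\varphi_{p,R}|_g^n\,d\nu \;=\; \int_{B(p,2R)}|\nabla(x_p\circ\theta_{p,t})|_g^n\,d\nu \;=\; \int_{B(p,\pi/2)}|\nabla x_p|_g^n\,d\nu,
\end{equation*}
and the right-hand side is a dimensional constant since $|\nabla x_p|_g=\sin d_g(p,\cdot)\le 1$ on $S^n$.

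The only step requiring any care, and what I expect to be the main technical point, is justifying the change of variables across the boundary $\partial B(p,2R)$ of the support: one must ensure that no boundary contribution appears when passing from $\varphi_{p,R}$ (defined piecewise) to the smooth function $x_p\circ\theta_{p,t}$. Since $\varphi_{p,R}$ is smooth on the open set $B(p,2R)$, identically zero outside, and $\partial B(p,2R)$ has measure zero, the pointwise identity $\nabla\varphi_{p,R}=\nabla(x_p\circ\theta_{p,t})\cdot\mathbf{1}_{B(p,2R)}$ holds a.e., and the conformal-invariance computation goes through unchanged.
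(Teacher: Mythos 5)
Your proposal is correct, and the first three bullets are established by essentially the same computation as the paper's, merely phrased in the stereographic variable $\cot(d_g(p,\cdot)/2)$ rather than in the coordinate $x^0=\cos d_g(p,\cdot)$. (Both arrive at $t=\tan R$ and both reduce the lower bound on $B(p,R)$ to the inequality $\cos\rho_0\geq 3/5$; in fact your bound $\tan(\rho_0/2)=\tfrac{\cos R}{1+\cos R}\leq \tfrac12$ is a tidier way to get the number $3/5$ than the paper's monotonicity argument for $f(R)=\tfrac{2\cos R+1}{2\cos^2 R+2\cos R+1}$.) A small stylistic remark on the Lipschitz claim: the cleanest way to say it is that $\varphi_{p,R}=\max\bigl(x_p\circ\theta_{p,t},\,0\bigr)$ on all of $S^n$, since $\theta_{p,t}$ carries $B(p,2R)$ onto the open hemisphere where $x_p>0$ and its complement onto the set where $x_p\leq 0$; the maximum of two smooth functions on a compact manifold is Lipschitz, which also disposes of the a.e.\ differentiability point at the end.

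Where you genuinely diverge from the paper is in the last bullet. The paper proves the $L^n$-gradient bound by writing $\varphi_{p,R}$ explicitly in spherical coordinates, differentiating the closed-form expression for $x_p\circ\theta_{p,t}$, and then using the Cauchy--Schwarz estimate $t^2(1+\cos\phi_1)+(1-\cos\phi_1)\geq 2t\sin\phi_1$ to evaluate the resulting one-dimensional integral. You instead invoke the conformal invariance of the functional $u\mapsto\int_{S^n}|\nabla u|_g^n\,dV_g$ under conformal diffeomorphisms (the exponent $n$ is exactly what makes the Jacobian factor cancel), which immediately gives
\begin{equation*}
\int_{S^n}|\nabla\varphi_{p,R}|_g^n\,d\nu
=\int_{B(p,2R)}|\nabla(x_p\circ\theta_{p,t})|_g^n\,d\nu
=\int_{B(p,\pi/2)}|\nabla x_p|_g^n\,d\nu,
\end{equation*}
a fixed dimensional constant. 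This is shorter and more conceptual: it makes visible \emph{why} the bound is independent of $R$ (conformal invariance), whereas the paper's direct computation only reveals this a posteriori. The paper's route is more elementary and self-contained; yours buys transparency and avoids coordinates entirely. Either is acceptable.
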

\begin{proof}
	The first property follows immediately by noting $x_p=x\cdot p=\cos\left(d_g(p,x)\right)$. The third property is also straightforward by the definition of $\varphi_{p,R}$. It remains to check the second one and the last one.
	
	We need to compute $t$ in terms of $R$ explicitly. As $(S^n,g)$ is homogeneous under the action of $\text{SO}(n+1)$, we can assume that $p=(1,0,0,\cdots,0)$. We denote $x'=(x^1,x^2,\cdots,x^n)$ and $x=(x^0,x')$. Recall the formulas for the stereographic projection:
	\begin{align*}
		\sigma_p(x)=\frac{x'}{1-x^0} \quad \mbox{ for any } x\in S^n, 
	\end{align*}
	and
	\begin{align*}
		\sigma_p^{-1}(y)=\left(\frac{|y|^2-1}{|y|^2+1},\frac{2y}{|y|^2+1}\right) \quad \mbox{ for any } y\in \mathbb{R}^n.
	\end{align*}
	By a straightforward computation, we obtain that for any $x\in S^n$
	\begin{equation}\label{conformal map theta}
		\theta_{p,t}(x)=\left(\frac{t^2(1+x^0)-(1-x^0)}{t^2(1+x^0)+(1-x^0)},\frac{2tx'}{t^2(1+x^0)+(1-x^0)}\right).
	\end{equation}
	Recall that $t$ is chosen so that $\theta_{p,t}$ maps $B(p,2R)$ onto $B(p,\frac{\pi}{2})$. In particular, $\theta_{p,t}$ maps $\partial B(p,2R)$ to $\partial B(p,\frac{\pi}{2})$ since it preserves level sets of the distance function $d_g(p,\cdot)$. Take $x\in \partial B(p,2R)$. Then 
	\begin{equation*}
		x_p=x^0=\cos(d_g(x,p))=\cos(2R).
	\end{equation*}
	And $\theta_{p,t}(x)\in \partial B(p,\frac{\pi}{2})$ implies that
	\begin{equation*}
		x_p(\theta_{p,t}(x))=\frac{t^2(1+x^0)-(1-x^0)}{t^2(1+x^0)+(1-x^0)}=\cos\left(\pi/2\right)=0.
	\end{equation*}
	Combine these two equations and we can solve
	\begin{equation*}
		t=\sqrt{\frac{1-\cos(2R)}{1+\cos(2R)}}=\tan R.
	\end{equation*}
	
	We are now ready to prove the second property. Since $\varphi_{p,R}(x)=\cos(d_g(p,\theta_{p,t}(x)))$ on the closed ball $x\in \overline{B(p,R)}$, the minimum is attained on the boundary $\partial B(p,R)$. For any $x\in \partial B(p,R)$, we have $x^0=\cos R$ and 
	\begin{equation*}
		\varphi_{p,R}(x)=\frac{\tan^2R\cdot(1+\cos R)-(1-\cos R)}{\tan^2 R\cdot (1+\cos R)+(1-\cos R)}=\frac{2\cos R+1}{2\cos^2R+2\cos R+1}.
	\end{equation*}
	We set $f(R)=\frac{2\cos R+1}{2\cos^2R+2\cos R+1}$. A straightforward computation shows that $f(R)$ is increasing on $(0,\frac{\pi}{2})$. Therefore, the second property follows by the fact
	\begin{equation*}
		\lim_{R\rightarrow 0}f(R)=\frac{3}{5}.
	\end{equation*}
	
	Next we prove the last property of $\varphi_{p,R}$. 
	Using the spherical coordinates, we have
	\begin{align*}
	&x^0=\cos\phi_1,\\
	&x^1=\sin\phi_1\cos\phi_2,\\
	&x^2=\sin\phi_1\sin\phi_2\cos\phi_3,\\
	&\cdots\\
	&x^{n-1}=\sin\phi_1\sin\phi_2\cdots\sin\phi_{n-1}\cos\phi_{n},\\
	&x^{n}=\sin\phi_1\sin\phi_2\cdots\sin\phi_{n-1}\sin\phi_{n}.
	\end{align*}
	Accordingly, the round metric writes into
	\begin{equation*}
	g=d\phi_1^2+\sin^2\phi_1d\phi_2^2+\sin^2\phi_1\sin^2\phi_2d\phi_3^2+\cdots+\sin^2\phi_1\sin^2\phi_2\cdots\sin^2\phi_{n-1}d\phi_{n}^2.
	\end{equation*} 
	Thus, using the expression in \eqref{conformal map theta}, we obtain
	\begin{align*}
	\int_{S^n}|\nabla\varphi_{p,R}|^nd\nu\leq &4^n\int_0^{\pi}\frac{t^{2n}\sin^{2n-1}\phi_1}{\left(t^2(1+\cos\phi_1)+1-\cos\phi_1\right)^{2n}}d\phi_1
	\\&
	\cdot
	\int_0^{\pi}\cdots\int_0^{\pi}\int_0^{2\pi}\sin^{n-2}\phi_2\sin^{n-3}\phi_3\cdots \sin\phi_{n-1} d\phi_2d\phi_3\cdots d\phi_{n}. 
	\end{align*}
	As the second integral on the right-hand side is a constant only depending on $n$, we just need to estimate the first one. Note that $t^2(1+\cos\phi_1)+1-\cos\phi_1\geq 2t\sin\phi_1$ by Cauchy-Schwarz inequality. It follows that
	\begin{align*}
	\int_0^{\pi}\frac{t^{2n}\sin^{2n-1}\phi_1}{\left(t^2(1+\cos\phi_1)+1-\cos\phi_1\right)^{2n}}d\phi_1
	\leq& 4^{1-n}\int_0^{\pi}\frac{t^{2}\sin\phi_1}{\left(t^2(1+\cos\phi_1)+1-\cos\phi_1\right)^2}d\phi_1\\
	=&4^{1-n}\int_{-1}^1\frac{t^{2}}{\left(t^2(1+x_1)+1-x_1\right)^2}dx_1\\
	=&2^{1-2n}.
	\end{align*}
	Therefore, there exists some constant $C(n)$ only depending on $n$, such that
	\begin{equation*}
	\int_{S^n}|\nabla\varphi_j|^nd\nu\leq C(n).
	\end{equation*}
\end{proof}

Similarly, for a given point $p\in S^n$ and $r\in (0,\pi)$, we choose $\tau=\tau(r)\in \mathbb{R}^+$ such that $\theta_{p,\tau}$ maps $B(p,\frac{r}{2})$ onto $B(p,\frac{\pi}{2})$. And we define
\begin{equation*}
\bar{\varphi}_{p,r}(x)=
\begin{cases}
0, & \quad \mbox{ if } x\in B(p,\frac{r}{2}),\\
-x_p\circ \theta_{p,\tau}(x), & \quad \mbox{ if } x\not\in B(p,\frac{r}{2}).
\end{cases}
\end{equation*}
$\bar{\varphi}_{p,r}(x)$ has similar properties as in Lemma \ref{test function on ball}.
\begin{lem}\label{test function outside ball}
	$\bar{\varphi}_{p,r}$ is a Lipschitz function on $S^n$ satisfying
	\begin{itemize}
		\item $0\leq \bar{\varphi}_{p,r}(x)\leq 1$ for any $x\in S^n$;
		\item $\bar{\varphi}_{p,r}(x)\geq \frac{3}{5}$ for any $x\not\in B(p,r)$;
		\item $\supp \bar{\varphi}_{p,r}\subset S^n\setminus B(p,\frac{r}{2})$.
		\item 
		\begin{equation*}
		\int_{S^n}|\nabla \bar{\varphi}_{p,r}|^n_gd\nu\leq C(n),
		\end{equation*}
		where $C(n)$ is a constant only depending on $n$.
	\end{itemize}
\end{lem}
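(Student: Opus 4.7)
My plan is to mirror the proof of Lemma \ref{test function on ball}, exploiting an antipodal symmetry so as not to redo the integral computation. The first and third bullets are immediate from the definition: the conformal map $\theta_{p,\tau}$ preserves level sets of $d_g(p,\cdot)$ and carries $S^n\setminus B(p,r/2)$ onto the closed hemisphere $\{x\in S^n : x_p(x)\le 0\}$, so $-x_p\circ\theta_{p,\tau}$ takes values in $[0,1]$, and $\supp\bar\varphi_{p,r}\subset S^n\setminus B(p,r/2)$ by construction.

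For the second bullet I would first determine $\tau$. Requiring $\theta_{p,\tau}$ to send $\partial B(p,r/2)$ (where $x^0=\cos(r/2)$) to $\partial B(p,\pi/2)$ (where the $0$-th coordinate vanishes), formula \eqref{conformal map theta} yields $\tau=\tan(r/4)$. Because $\theta_{p,\tau}$ preserves level sets of $d_g(p,\cdot)$ and pushes points of $S^n\setminus B(p,r/2)$ monotonically toward $-p$, the function $\bar\varphi_{p,r}$ is a monotone increasing function of $d_g(p,x)$; hence its infimum on $\{d_g(p,x)\ge r\}$ is attained on $\partial B(p,r)$. Inserting $x^0=\cos r$ and $\tau^2=\tan^2(r/4)$ into \eqref{conformal map theta} gives, after simplification,
\begin{equation*}
\bar\varphi_{p,r}(x)=\frac{\cos(r/2)-\cos r}{1-\cos(r/2)\cos r}=\frac{2ab}{a^2+b^2},\qquad x\in\partial B(p,r),
\end{equation*}
with $a=\sin(r/4)$ and $b=\sin(3r/4)$. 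The desired inequality $2ab/(a^2+b^2)\ge 3/5$ reduces to $-(3a-b)(a-3b)\ge 0$, which holds since $a<b<3a$ on $(0,\pi)$, with equality only in the limit $r\to 0^+$.

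For the fourth bullet I would avoid repeating the $L^n$-integral computation by invoking an antipodal symmetry. The two stereographic projections $\sigma_p,\sigma_{-p}$ are intertwined by the inversion $\iota:y\mapsto y/|y|^2$ on $L_p=L_{-p}$, and since $\iota\circ\delta_s\circ\iota=\delta_{1/s}$, one obtains the identity $\theta_{-p,s}=\theta_{p,1/s}$ for every $s>0$. Setting $R'=\pi/2-r/4\in(0,\pi/2)$, one has $\tan R'=\cot(r/4)=1/\tau$, the supports of $\bar\varphi_{p,r}$ and $\varphi_{-p,R'}$ agree exactly, and the defining formulas match under this identity; thus $\bar\varphi_{p,r}\equiv\varphi_{-p,R'}$ on $S^n$, and the gradient bound follows at once from Lemma \ref{test function on ball} applied to $\varphi_{-p,R'}$. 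The main care is in the bookkeeping between $\tau$ and $1/\tau$; note that this symmetry alone does \emph{not} imply the second bullet, since Lemma \ref{test function on ball} would only guarantee $\bar\varphi_{p,r}\ge 3/5$ on $B(-p,R')$, a set strictly smaller than $\{d_g(p,x)\ge r\}$ for small $r$---which is precisely why the direct calculation above is needed there.
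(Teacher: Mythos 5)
Your proof is correct. Since the paper gives no explicit proof for this lemma (deferring to the analogy with Lemma~\ref{test function on ball}), the implicit expectation is that one reruns the computation of that proof with $\bar\varphi_{p,r}$ in place of $\varphi_{p,R}$. You do exactly that for the second bullet, and your algebra checks out: $\tau=\tan(r/4)$, the value on $\partial B(p,r)$ simplifies to $\frac{\cos(r/2)-\cos r}{1-\cos(r/2)\cos r}=\frac{2ab}{a^2+b^2}$ with $a=\sin(r/4)$, $b=\sin(3r/4)$, the monotonicity of $-x_p\circ\theta_{p,\tau}$ in $d_g(p,\cdot)$ reduces the infimum to the inner sphere, and $a<b<3a$ on $(0,\pi)$ gives $(3a-b)(a-3b)\le 0$, hence the bound $3/5$.

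Where you genuinely depart from the paper's (implied) route is the fourth bullet: rather than redoing the $L^n$-gradient integral, you observe that $\sigma_{-p}=\iota\circ\sigma_p$ with $\iota(y)=y/|y|^2$, hence $\theta_{-p,s}=\theta_{p,1/s}$, and with $R'=\pi/2-r/4$ (so $\tan R'=1/\tau$ and $B(-p,2R')=S^n\setminus\overline{B(p,r/2)}$) one gets $\bar\varphi_{p,r}\equiv\varphi_{-p,R'}$; the vanishing of both functions on $\partial B(p,r/2)$ reconciles the open/closed discrepancy in the definitions. This gives the integral bound (and the Lipschitz claim) for free from Lemma~\ref{test function on ball}. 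It is a clean shortcut, and your caveat is exactly right: Lemma~\ref{test function on ball} only yields $\varphi_{-p,R'}\ge 3/5$ on $B(-p,R')=\{d_g(p,\cdot)>\pi/2+r/4\}$, which is strictly smaller than $\{d_g(p,\cdot)\ge r\}$ for small $r$, so the direct calculation for the second bullet is not dispensable. Both approaches are sound; yours is more economical on the integral estimate while being equally explicit on the lower bound.
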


Generally, for a given annulus $A(p;r,R)\subset S^n$, we construct the test function as 
\begin{equation*}
	\varphi_{p,r,R}=\varphi_{p,R}\cdot \bar{\varphi}_{p,r}.
\end{equation*}
Combining Lemma \ref{test function on ball} and Lemma \ref{test function outside ball}, we have
\begin{lem}\label{test function annulus}
	${\varphi}_{p,r,R}$ is a Lipschitz function on $S^n$ satisfying
	\begin{itemize}
		\item $0\leq \varphi_{p,r,R}(x)\leq 1$ for any $x\in S^n$;
		\item $\varphi_{p,r,R}(x)\geq \frac{9}{25}$ for any $x\in A(p;r,R)$;
		\item $\supp \varphi_{p,r,R}\subset A(p;\frac{r}{2},2R)$.
		\item 
		\begin{equation}\label{conformally invariant factor}
		\int_{S^n}|\nabla \varphi_{p,r,R}|^n_gd\nu\leq C(n),
		\end{equation}
		where $C(n)$ is a constant only depending on $n$.
	\end{itemize}
\end{lem}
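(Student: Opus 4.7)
The plan is to obtain all four properties by combining the two preceding lemmas via elementary pointwise arithmetic, so no new geometry is required. Since $\varphi_{p,R}$ and $\bar{\varphi}_{p,r}$ are both Lipschitz and bounded, their product $\varphi_{p,r,R}$ is automatically Lipschitz. The bound $0 \leq \varphi_{p,r,R} \leq 1$ is immediate from the corresponding $[0,1]$-bounds in Lemmas \ref{test function on ball} and \ref{test function outside ball}. For the lower bound on the annulus, note that any $x \in A(p;r,R)$ satisfies $d_g(p,x) < R$, hence $x \in B(p,R)$ and $\varphi_{p,R}(x) \geq 3/5$; simultaneously $d_g(p,x) \geq r$, so $x \notin B(p,r)$ and $\bar{\varphi}_{p,r}(x) \geq 3/5$. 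Multiplying yields $\varphi_{p,r,R}(x) \geq 9/25$. The support statement follows from $\operatorname{supp} \varphi_{p,r,R} \subset \operatorname{supp} \varphi_{p,R} \cap \operatorname{supp} \bar{\varphi}_{p,r} \subset B(p,2R) \cap (S^n \setminus B(p,r/2)) = A(p;\tfrac{r}{2},2R)$.

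The only step requiring any computation is the $L^n$ gradient estimate. I would apply the Leibniz rule almost everywhere,
\begin{equation*}
\nabla \varphi_{p,r,R} = \bar{\varphi}_{p,r}\, \nabla \varphi_{p,R} + \varphi_{p,R}\, \nabla \bar{\varphi}_{p,r},
\end{equation*}
and use $0 \leq \varphi_{p,R}, \bar{\varphi}_{p,r} \leq 1$ to obtain the pointwise bound $|\nabla \varphi_{p,r,R}|_g \leq |\nabla \varphi_{p,R}|_g + |\nabla \bar{\varphi}_{p,r}|_g$. Then the elementary inequality $(a+b)^n \leq 2^{n-1}(a^n + b^n)$ for $a,b \geq 0$ gives
\begin{equation*}
|\nabla \varphi_{p,r,R}|_g^n \leq 2^{n-1}\bigl(|\nabla \varphi_{p,R}|_g^n + |\nabla \bar{\varphi}_{p,r}|_g^n\bigr).
\end{equation*}
Integrating over $S^n$ and invoking the last bullet in each of the two previous lemmas yields $\int_{S^n} |\nabla \varphi_{p,r,R}|_g^n\, d\nu \leq 2^n C(n)$, which can be absorbed into a new dimensional constant.

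There is no real obstacle here; the work has already been done in Lemmas \ref{test function on ball} and \ref{test function outside ball}. The only small point to double-check is the compatibility of the open/closed conventions defining $B(p,r)$ and $A(p;r,R)$, i.e.\ that $A(p;r,R) = \{r \leq d_g(p,\cdot) < R\}$ is disjoint from the open ball $B(p,r)$ and contained in $B(p,R)$, so that both $3/5$ lower bounds are available simultaneously.
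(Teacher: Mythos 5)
Your proof is correct and matches the paper's (implicit) argument exactly: the paper states Lemma~\ref{test function annulus} with the one-line justification ``Combining Lemma~\ref{test function on ball} and Lemma~\ref{test function outside ball}," which is precisely the pointwise multiplication, support-intersection, and Leibniz-plus-$(a+b)^n\le 2^{n-1}(a^n+b^n)$ argument you spell out.
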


\section{Proof of Theorem \ref{main}}
\begin{proof}[Proof of Theorem \ref{main}]
	
Recall the collection of annuli $\{A_j=A(p_j;r_j,R_j)\}_{j=1}^k$ satisfying properties (i), (ii) and \eqref{annulus measure upper bound} in Section \ref{section metric space}. 
For each annulus $A_j$ with $1\leq j\leq k$, set $\varphi_{j}:=\varphi_{p_j;r_j,R_j}$. We use $\nabla$, $\widetilde{\nabla}$ to denote the Levi-Civita connection for the metric $g$ and $\widetilde{g}$ respectively. For any Lipschitz function $f$ on $S^n$, let $\mathcal{R}(f)$ be the Rayleigh quotient:
\begin{equation*}
\mathcal{R}(f)=\frac{\int_{S^n}|\widetilde{\nabla} f|_{\widetilde{g}}^2+c_nR_{\widetilde{g}}|f|^2 d\widetilde{\nu}}{\int_{S^n}|f|^2d\widetilde{\nu}},
\end{equation*}
where $c_n=\frac{n-2}{4(n-1)}$ is the constant appearing in the conformal laplacian.

To prove Theorem \ref{main}, it is sufficient to show that for $1\leq j\leq k$, 
\begin{equation}
\mathcal{R}\left(\varphi_j/\mu\right)\leq C(n)k^{2/n} \left(\int_{S^n}\mu^{\frac{4}{n-2}}dV_g\right)^{-1}.
\end{equation}

We will estimate the numerator and denominator in the Rayleigh quotient respectively. 

We begin with the denominator. Since $\varphi_j\geq \frac{9}{25}$ on $A_j$ and by \eqref{covering property equation}, we have
\begin{equation}\label{denominator}
\int_{S^n} \left|\varphi_j/\mu\right|^2 d\widetilde{\nu}\geq \left(\frac{3}{5}\right)^4\int_{A_j}\mu^{-2}d\widetilde{\nu}\geq c\left(\frac{3}{5}\right)^4\frac{m(S^n)}{k},
\end{equation}
where $c$ is a constant only depends on $n$.

Now we consider the denominator. We can actually change the metric $\widetilde{g}$ to the round metric $g$ by the following lemma.
\begin{lem}
For any Lipschitz function $f$ on $S^n$, we have
\begin{equation}\label{integral under conformal change}
	\int_{S^n}|\widetilde{\nabla}  f|_{\widetilde{g}}^2+c_nR_{\widetilde{g}}|f|^2 d\widetilde{\nu}=\int_{S^n}|\nabla (\mu f)|_{g}^2+c_nR_{g}|\mu f|^2 d\nu
\end{equation}
\end{lem}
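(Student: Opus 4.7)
The plan is to convert everything on the left-hand side, which is written in the metric $\widetilde{g}$, into integrals against the background round metric $g$, and then to integrate by parts on the closed manifold $S^n$ to recognize the right-hand side.

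First I would collect the pointwise conformal transformation rules for $\widetilde{g}=\mu^{4/(n-2)}g$: the volume form obeys $d\widetilde{\nu}=\mu^{2n/(n-2)}d\nu$, the gradient norm scales as $|\widetilde{\nabla} f|^2_{\widetilde{g}}=\mu^{-4/(n-2)}|\nabla f|^2_g$, and the scalar curvature satisfies the classical conformal Laplacian identity
\begin{equation*}
\Box_g\mu=-\Delta_g\mu+c_nR_g\mu=c_nR_{\widetilde{g}}\,\mu^{(n+2)/(n-2)}.
\end{equation*}
Substituting the first two formulas into $\int|\widetilde{\nabla} f|^2_{\widetilde{g}}\,d\widetilde{\nu}$ collapses the powers of $\mu$ and produces $\int_{S^n}\mu^2|\nabla f|^2_g\,d\nu$. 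Substituting the third into $\int c_nR_{\widetilde{g}}|f|^2\,d\widetilde{\nu}$ rewrites it as $\int_{S^n}(\Box_g\mu)\,\mu f^2\,d\nu$. Together the left-hand side becomes
\begin{equation*}
\int_{S^n}\mu^2|\nabla f|^2_g\,d\nu+\int_{S^n}(\Box_g\mu)\,\mu f^2\,d\nu.
\end{equation*}

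Next I would expand the right-hand side directly by the Leibniz rule,
\begin{equation*}
|\nabla(\mu f)|^2_g=\mu^2|\nabla f|^2_g+2\mu f\langle\nabla\mu,\nabla f\rangle_g+f^2|\nabla\mu|^2_g,
\end{equation*}
and observe that the last two terms add up to $\langle\nabla\mu,\nabla(\mu f^2)\rangle_g$. An integration by parts on $S^n$ converts this cross term into $-(\Delta_g\mu)\,\mu f^2$, and combining with the $c_nR_g(\mu f)^2$ contribution gives exactly $(\Box_g\mu)\,\mu f^2$. This matches the expression obtained from the left-hand side, so the two sides coincide.

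The only real subtlety I anticipate is that $f$ is merely Lipschitz, so the integration by parts above needs a brief justification. Since $\mu$ is a smooth positive function on the closed manifold $S^n$, the function $\mu f^2$ lies in $W^{1,\infty}$, and $\mathrm{div}_g(\mu f^2\nabla\mu)$ is bounded; one then concludes either by smooth approximation of $f$ or by the divergence theorem in $W^{1,\infty}$. With that technicality handled, the identity is an accounting of the transformation rules.
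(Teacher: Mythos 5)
Your proof is correct, and it takes a slightly different route from the paper's. The paper's proof uses the conformal covariance law for the full operator,
\begin{equation*}
(-\Delta_g+c_nR_g)(\mu f)=\mu^{(n+2)/(n-2)}(-\Delta_{\widetilde g}+c_nR_{\widetilde g})(f),
\end{equation*}
multiplies both sides by $\mu f\,d\nu$, substitutes $d\widetilde\nu=\mu^{2n/(n-2)}d\nu$ to recognize $(-\Delta_{\widetilde g}+c_nR_{\widetilde g})(f)\cdot f\,d\widetilde\nu$ on the right, and then integrates by parts once on each side. You instead decompose the left-hand side into the kinetic and potential pieces separately, using the gradient scaling $|\widetilde\nabla f|^2_{\widetilde g}=\mu^{-4/(n-2)}|\nabla f|^2_g$ and the scalar-curvature identity $\Box_g\mu=c_nR_{\widetilde g}\mu^{(n+2)/(n-2)}$ (which is just the conformal covariance law applied to $f\equiv 1$), and then match against a Leibniz expansion of the right-hand side. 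The paper's argument is more compact because it invokes the full conformal covariance as a black box; yours is more transparent, since each ingredient (gradient rescaling, Yamabe equation for $\mu$, divergence theorem) is stated explicitly, at the cost of one extra integration by parts to recombine the cross term $\langle\nabla\mu,\nabla(\mu f^2)\rangle$. Your remark about the Lipschitz regularity and smooth approximation is a reasonable treatment of the same technicality that the paper dispatches with the sentence ``it is sufficient to consider $f\in C^\infty(M)$.''
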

\begin{proof}
	It is sufficient to consider $f\in C^{\infty}(M)$. The conformal law for the conformal laplacian gives for metrics related by $\widetilde g= \mu^{4/(n-2)} g$
\begin{equation}\label{laplace under conformal change}
	(-\Delta_{g}+c_nR_{g})(\mu \, f)=\mu^{(n+2)/(n-2)}\, (-\Delta_{\tilde g}+c_nR_{\tilde g})(f)
\end{equation}
	And the volume measures are related by
	\begin{equation*}
		d\widetilde{\nu}=\mu^{2n/(n-2)}d\nu.
	\end{equation*}
	Therefore,
	\begin{align*}
		(-\Delta_{g}+c_nR_{g})(\mu \, f) \cdot (\mu f)d\nu
		=&\mu^{(n+2)/(n-2)}\, (-\Delta_{\tilde g}+c_nR_{\tilde g})(f)\cdot (\mu f)\mu^{-2n/(n-2)}d\widetilde{\nu}
		\\
		=&(-\Delta_{\tilde g}+c_nR_{\tilde g})(f)\cdot fd\widetilde{\nu}.
	\end{align*}
	The result follows immediately by integration by parts.
\end{proof}

Applying \eqref{integral under conformal change} to the numerator in $\mathcal{R}(\varphi_j/\mu)$, we obtain
\begin{equation}\label{numerator eq 1}
	\int_{S^n}|\widetilde{\nabla}  (\varphi_j/\mu)|_{\widetilde{g}}^2+c_nR_{\widetilde{g}}|\varphi_j/\mu|^2 d\widetilde{\nu}=\int_{S^n}|\nabla (\varphi_j)|_{g}^2+c_nR_{g}|\varphi_j|^2 d\nu.
\end{equation}
By using the H\"older inequality, as $\supp \varphi_j\subset 2A_j$,
\begin{equation*}
\int_{S^n}|\nabla\varphi_j|_g^2 d\nu
\leq \left(\int_{S^n}|\nabla\varphi_j|^n d\nu\right)^{2/n} \cdot \nu(2A_j)^{1-2/n}.
\end{equation*}
Recalling \eqref{annulus measure upper bound} and \eqref{conformally invariant factor}, it follows that
\begin{equation*}
\int_{S^n}|\nabla\varphi_j|_g^2 d\nu
\leq C(n)^{2/n} \cdot \left(\frac{\nu(S^n)}{k}\right)^{1-2/n}.
\end{equation*}
What's more, 
\begin{equation*}
	\int_{S^n} c_nR_g|\varphi_j|^2d\nu\leq n(n-1)c_n\cdot\nu(2A_j)
	\leq n(n-1)c_n\frac{\nu(S^n)}{k}. 
\end{equation*}
Plugging the above two inequalities into \eqref{numerator eq 1} and enlarging the constant $C(n)$ if necessary,
\begin{equation}\label{numerator}
	\int_{S^n}|\widetilde{\nabla}  (\varphi_j/\mu)|_{\widetilde{g}}^2+c_nR_{\widetilde{g}}|\varphi_j/\mu|^2 d\widetilde{\nu}
	\leq \frac{C(n)}{k^{1-2/n}}.
\end{equation}
Combining \eqref{denominator} and \eqref{numerator}, it follows that for any $1\leq j\leq k$,
\begin{equation*}
\mathcal{R}\left(\frac{\varphi_j}{\mu}\right)\leq \frac{C(n)}{c}\left(\frac{5}{3}\right)^4 k^{2/n}\left(m(S^n)\right)^{-1}.
\end{equation*}
The result therefore follows by renaming the constant $\frac{C(n)}{c}\left(\frac{5}{3}\right)^4$ to $C(n)$.
\end{proof}

\section{Generalizations and Related Questions}
\subsection{Generalizations to any closed manifolds}
Let $(M^n,g)$ be a closed Riemannian manifold of dimension $m\geq 3$. Then we can isometrically embed $(M,g)$ into $(S^N, g_0)$ for some $N\in \mathbb{N}$ depending on $n$ with the round metric $g_0$. Let $\phi: (M,g)\rightarrow (S^N,g_0)$ be such an embedding satisfying
\begin{equation*}
	\phi^*g_0=g.
\end{equation*}
Let $\widetilde{g}\in [g]$ be a conformal metric, which is related to $g$ as
\begin{equation*}
	\widetilde{g}=\mu^{4/(n-2)}g, \quad \mbox{ for some } \mu \in C^{\infty}(M).
\end{equation*} 
Recall the \emph{$N$-conformal volume $V_c(N,\phi)$ of $\phi$} (\cite{LiYau}), defined as 
\begin{equation}
	V_c(N,\phi)=\sup\{\Vol(M,(s\circ\phi)^*g_0): s \text{ is a conformal diffeomorphism of } S^N\}.
\end{equation}
For closed Riemannian manifolds, we can bound the eigenvalues of the conformal laplacians by the $N$-conformal volume $V_c(N,\phi)$.

\begin{theorem}\label{theorem submanifolds}
	For any metric $\widetilde{g}\in [g]$, the $k$th eigenvalue $\widetilde{\lambda}_k=\lambda_k(M^n,\widetilde{g})$ of the conformal laplacian $\Box_{\widetilde{g}}$ satisfies the inequality
	\begin{equation}\label{eigenvalue closed manifold}
		\widetilde{\lambda}_k \int_M \mu^{\frac{4}{n-2}} dV_{g}\leq C(n) \, \left(V_c(N,\phi) \, k^{2/n}+\sup_M R_g\right).
	\end{equation}
\end{theorem}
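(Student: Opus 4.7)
The plan is to adapt the proof of Theorem \ref{main} by transferring the construction to the ambient round sphere $(S^N, g_0)$ via the isometric embedding $\phi$, so that the covering annuli live on $S^N$ and test functions on $M$ are obtained by pullback through $\phi$. The one part of the sphere argument that fails in this setting is the conformal invariance of the $n$-energy (which required degree and dimension of the sphere to match), and this is precisely where the conformal volume $V_c(N,\phi)$ enters.

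Concretely, I would push forward the measure $m = \phi_*\bigl(\mu^{4/(n-2)}\, dV_g\bigr)$ onto $(S^N, g_0)$ and apply Theorem \ref{covering theorem} to $(S^N, d_{g_0}, m)$ with integer $2k$; since the covering constant of $S^N$ depends only on $N$, and $N$ depends only on $n$, this yields $2k$ annuli $A_j = A(p_j; r_j, R_j) \subset S^N$ with $\{2A_j\}$ pairwise disjoint and $m(A_j) \geq c(n)\, m(S^N)/k$. Re-indexing as in Section \ref{section metric space} so that the first $k$ annuli satisfy $V_g(\phi^{-1}(2A_j)) = \phi_*(dV_g)(2A_j) \leq V_g(M)/k$, I set $\varphi_j = \psi_j \circ \phi$ where $\psi_j$ is the Lemma \ref{test function annulus} test function on $S^N$ attached to $A_j$. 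The Rayleigh denominator is bounded below by the pushforward computation
\[
\int_M (\varphi_j/\mu)^2\, dV_{\widetilde g} = \int_{S^N}\psi_j^2\, dm \geq \bigl(\tfrac{9}{25}\bigr)^2 m(A_j) \geq C(n)\, m(S^N)/k,
\]
while the conformal change formula \eqref{integral under conformal change} reduces the numerator to $\int_M |\nabla \varphi_j|^2_g\, dV_g + c_n \int_M R_g \varphi_j^2\, dV_g$. The scalar curvature contribution is at most $c_n \sup_M R_g \cdot V_g(\supp \varphi_j) \leq C(n) \sup_M R_g \cdot V_g(M)/k$, and is thus harmless.

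The main obstacle is controlling $\int_M |\nabla \varphi_j|^n_g\, dV_g$, since conformal invariance of the $n$-energy requires dimension $n$, whereas our ambient sphere has dimension $N \neq n$. I would apply H\"older with exponents $n/2$ and $n/(n-2)$,
\[
\int_M |\nabla\varphi_j|^2_g\, dV_g \leq \Bigl(\int_M |\nabla\varphi_j|^n_g\, dV_g\Bigr)^{2/n} V_g(\supp \varphi_j)^{1-2/n},
\]
and then prove $\int_M |\nabla \varphi_j|^n_g\, dV_g \leq C(n)\, V_c(N,\phi)$ as follows. Each factor of $\psi_j = \varphi_{p_j, R_j} \cdot \bar\varphi_{p_j, r_j}$ has the form $f \circ \theta_j$, where $\theta_j$ is the M\"obius diffeomorphism of $S^N$ from Lemma \ref{test function on ball} and $f(x) = \pm x \cdot p_j$ is a linear coordinate function with $|\nabla f|_{g_0} \leq 1$. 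Writing $\theta_j^* g_0 = \Omega_j^2\, g_0$ and $\Phi_j = \theta_j \circ \phi : M \to S^N$, one has $\Phi_j^* g_0 = (\Omega_j \circ \phi)^2\, g$ since $\phi$ is isometric, whence $|\nabla(f\circ\Phi_j)|_g \leq (\Omega_j\circ\phi)\, |\nabla f|_{g_0}\circ\Phi_j \leq \Omega_j\circ\phi$ pointwise on $M$. Integrating gives
\[
\int_M |\nabla(f\circ\Phi_j)|^n_g\, dV_g \leq \int_M (\Omega_j\circ\phi)^n\, dV_g = V\bigl(M, (\theta_j\circ\phi)^*g_0\bigr) \leq V_c(N,\phi)
\]
by the very definition of $V_c$, and the Leibniz rule combines the two factors at the cost of a constant depending only on $n$. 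Gathering the H\"older estimate with the inequality $V_g(M) \leq V_c(N,\phi)$ (obtained from $s = \mathrm{id}$) to absorb the residual $V_g(M)^{1-2/n}$ factor into $V_c^{1-2/n}$, the Rayleigh quotient becomes bounded by $C(n)\bigl(V_c(N,\phi)\, k^{2/n} + \sup_M R_g\bigr)\, m(M)^{-1}$, and the variational characterization of $\widetilde\lambda_k$ yields \eqref{eigenvalue closed manifold}.
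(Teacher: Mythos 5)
Your proposal takes essentially the same route as the paper: push forward $\mu^{4/(n-2)}dV_g$ to $(S^N, g_0)$ via $\phi$, apply the Grigoryan--Netrusov--Yau covering theorem there, pull back the annulus test functions through $\phi$, use the conformal change formula \eqref{integral under conformal change} to reduce to the round metric, apply H\"older with exponents $n/2$ and $n/(n-2)$, and control the $n$-energy $\int_M |\nabla(\varphi_j\circ\phi)|_g^n\,dV_g$ via the conformal factor of the M\"obius maps and the definition of $V_c(N,\phi)$.

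The only place you are a bit more careful than the paper is in handling the product structure of the test function: the paper's proof writes ``$\varphi_j = x_{p_j}\circ\theta_{p_j}$'' and directly bounds $\int_M |\nabla(\varphi_j\circ\phi)|^n_g\,dV_g \leq \int_M |\nabla(\theta_{p_j}\circ\phi)|^n_g\,dV_g = n^{n/2}\Vol(M,(\theta_{p_j}\circ\phi)^*g_0)$, glossing over the fact that the annulus test function is the \emph{product} of two such compositions with different M\"obius maps $\theta_{p_j,t}$ and $\theta_{p_j,\tau}$; you apply the conformal factor estimate to each factor separately and invoke the Leibniz rule, which is the clean way to do it. One small note: tracking your constants (and the paper's own proof) gives the scalar curvature contribution to the Rayleigh bound as $C(n)\sup_M R_g\cdot V_c(N,\phi)\,m(M)^{-1}$ (or $V_g(M)$ in place of $V_c$), i.e.\ there is an extra volume-type factor multiplying $\sup_M R_g$ that the theorem statement as printed does not display; your final sentence quietly drops that factor, so you inherit this discrepancy rather than resolve it, but the paper's proof has the same feature.
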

\begin{proof}
	Let $m=\phi_*\left(\mu^{4/(n-2)}dV_{g}\right)$ be the push-forward measure on $S^N$.
	For the metric space $(S^N, d_{g_0})$ and measure $m$ and any $k\in \mathbb{Z}^+$, we can similarly construct the collection of annuli $\{A_j\}_{j=1}^k$ satisfying properties (i), (ii) and \eqref{annulus measure upper bound} in Section \ref{section metric space}. And for each $A_j$, we can construct test functions $\varphi_j$ satisfying the first three properties as in Lemma \ref{test function annulus}. 
	
	We will consider the Rayleigh quotient $\mathcal{R}\left(\frac{\varphi_j\circ\phi}{\mu}\right)$. Using \eqref{covering property equation}, the denominator gives
	\begin{equation}\label{denominator general case}
		\int_{M} \left|\frac{\varphi_j\circ \phi}{\mu}\right|^2 dV_{\widetilde{g}}\geq \left(\frac{3}{5}\right)^4\int_{\phi^{-1}(A_j)}\mu^{-2}dV_{\widetilde{g}}\geq c\left(\frac{3}{5}\right)^4 \frac{m(S^N)}{k}.
	\end{equation}
	Let $\nabla$ and $\widetilde{\nabla}$ be the Levi-Civita connection for $g$ and $\widetilde{g}$ respectively. For the numerator of the Rayleigh quotient,
	\begin{equation}\label{numberator eq1}
		\int_{M}\left|\widetilde{\nabla}  \left(\frac{\varphi_j\circ \phi}{\mu}\right)\right|_{\widetilde{g}}^2+c_nR_{\widetilde{g}}\left|\frac{\varphi_j\circ\phi}{\mu}\right|^2 dV_{\widetilde{g}}=\int_{M}|\nabla (\varphi_j\circ\phi)|_{g}^2+c_nR_{g}|\varphi_j\circ \phi|^2 dV_{g}.
	\end{equation}
	The first term on the right side can be estimated as
	\begin{align*}
		\int_{M}|\nabla(\varphi_j\circ\phi)|_{g}^2 dV_{g}
		\leq \left(\int_{M}|\nabla(\varphi_j\circ\phi)|_{g}^n dV_{g}\right)^{2/n} \cdot \left(\Vol_{g}(2A_j)\right)^{1-2/n}.
	\end{align*}
	Recall that $\varphi_j=x_{p_j}\circ \theta_{p_j}$ for some conformal diffeomorphism $\theta_{p_j}$ of $S^N$. Since $\theta_{p_j}\circ\phi$ is conformal, we have
	\begin{equation*}
		\left(\theta_{p_j}\circ\phi\right)^*g_0=\frac{1}{n}\left|\nabla\left(\theta_{p_j}\circ\phi\right)\right|^2_g \,g.
	\end{equation*}
	Therefore,
	\begin{align*}
		\int_{M}|\nabla(\varphi_j\circ\phi)|_{g}^n dV_{g}
		\leq& \int_{M}|\nabla(\theta_{p_j}\circ\phi)|_{g}^n dV_{g}
		\\
		=& n^{n/2}\Vol(M,(\theta_{p_j}\circ\phi)^*g_0)
		\\
		\leq & n^{n/2} V_c(N,\phi).
	\end{align*}
	And by \eqref{annulus measure upper bound},
	\begin{equation*}
		\Vol_{g}(2A_j)\leq \frac{\Vol_{g}(M)}{k}
		\leq \frac{V_c(N,\phi)}{k}.
	\end{equation*}
	Therefore,
	\begin{align*}
		\int_{M}|\nabla(\varphi_j\circ\phi)|_{g}^2 dV_{g}
		\leq \frac{nV_c(N,\phi)}{k^{1-2/n}}.
	\end{align*}
	On the other hand, the second term on the right side of \eqref{numberator eq1} satisfies
	\begin{equation*}
		\int_M c_nR_{g}|\varphi_j\circ\phi|^2 dV_{g}
		\leq c_n\sup_{M} R_g \cdot\Vol_{g}(2A_j)\leq c_n\sup_M R_g\cdot \frac{V_c(N,\phi)}{k}.
	\end{equation*}
	Combining the above two estimates, we obtain
	\begin{equation}\label{numerator general case}
		\int_{M}\left|\widetilde{\nabla} \left(\frac{\varphi_j\circ \phi}{\mu}\right)\right|_{\widetilde{g}}^2+c_nR_{\widetilde{g}}\left|\frac{\varphi_j\circ\phi}{\mu}\right|^2 dV_{\widetilde{g}}
		\leq \left(n+c_n\frac{\sup_M R_g}{k^{2/n}}\right) \frac{V_c(N,\phi)}{k^{1-2/n}}.
	\end{equation}
	Using \eqref{numerator general case} and \eqref{denominator general case}, the desired estimates for the Rayleigh quotient $\mathcal{R}(\frac{\varphi_j\circ \phi}{\mu})$ is obtain and the result follows immediately.
\end{proof}

\begin{rema}
	Given an immersion $\phi: M\rightarrow S^N$, Li and Yau \cite{LiYau} gives the following condition for the conformal volume $V_c(N,\phi)$ to be identical to the volume of $M$.
	\begin{theorem}[\cite{LiYau}]
		Let $M$ be a homogeneous Riemannian manifold of dimension $n$. Suppose $\phi: M\rightarrow S^N$ is an immersion of $M$ into $S^N$ which satisfies the properties:
		\begin{itemize}
			\item[(i)] $\phi$ is an isometric minimal immersion.
			\item[(ii)] The transitive subgroup $H$ of the isometry group of $M$ is induced by a subgroup, also denoted by $H$, of $O(N+1)$ (i.e., $\phi$ is equivariant).
			\item[(iii)] $\phi(M)$ does not lie on any hyperplane of $\mathbb{R}^{N+1}$.
		\end{itemize}
		Then 
		\begin{equation*}
			V_c(N,\phi)=\Vol(M).
		\end{equation*}
	\end{theorem}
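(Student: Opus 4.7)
My plan is to split the proof into an easy lower bound and a substantive upper bound. The inequality $V_c(N,\phi)\geq \Vol(M)$ is immediate from the supremum defining $V_c$: taking $s=\mathrm{id}$ and using that $\phi$ is isometric gives $\Vol(M,\phi^*g_0)=\Vol(M)$. The heart of the theorem is the matching upper bound
\[
\Vol\bigl(M,(s\circ\phi)^*g_0\bigr)\;\leq\;\Vol(M)
\qquad\text{for every conformal diffeomorphism }s\text{ of }S^N.
\]

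First I would reduce to a single-parameter family via the structure of the conformal group of $S^N$. Every conformal diffeomorphism of $S^N$ may be written as $s=R\circ F_a$, where $R\in O(N+1)$ is a rotation and $F_a$ is a standard M\"obius dilation indexed by a point $a$ in the open unit ball $B^{N+1}\subset\mathbb{R}^{N+1}$. Since $R$ is an isometry of $(S^N,g_0)$, it contributes nothing to the pullback volume, and writing $F_a^*g_0=\sigma_a^2 g_0$ the question reduces to bounding
\[
\Phi(a)\;:=\;\int_M \sigma_a(\phi(x))^{n}\, dV_g(x),\qquad a\in B^{N+1},
\]
by $\Phi(0)=\Vol(M)$ (using that $\sigma_0\equiv 1$).

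Next I would exploit the three hypotheses. Condition (i) together with $|\phi|\equiv 1$ gives the position-vector identity $\Delta_g\phi=-n\phi$ as an $\mathbb{R}^{N+1}$-valued equation on $M$, whence Stokes yields the centroid condition $\int_M\phi\, dV_g=0$. Condition (ii), combined with the transformation rule $\sigma_{\tilde h a}(\tilde h y)=\sigma_a(y)$ for $\tilde h\in O(N+1)$, shows that $\Phi$ is invariant under the induced $H$-action on $B^{N+1}$. Condition (iii) guarantees that $\phi(M)$ is non-degenerate in $\mathbb{R}^{N+1}$, which will rule out trivial null directions in the Hessian analysis below.

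The main obstacle is to promote these infinitesimal ingredients into the global bound $\Phi\leq\Phi(0)$. My plan is to expand $\Phi$ in $a$ about the origin: the centroid condition forces the origin to be a critical point of $\Phi$, and a Hessian calculation using $\Delta\phi=-n\phi$, $|\phi|\equiv 1$ and condition (iii) should produce a negative (semi-)definite second variation, yielding at least a strict local maximum at $a=0$. To go from a local statement to the global bound I expect to need a convexity or subharmonicity argument for $\Phi$ or $\log\Phi$ on $B^{N+1}$, together with a careful compactness analysis of the limits $a\to\partial B^{N+1}$, where $\sigma_a$ concentrates near a point of $S^N$ and $F_a\circ\phi$ degenerates. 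The boundary analysis is where the non-degeneracy (iii) and the equivariance must be used together to rule out the supremum being attained asymptotically, and I expect this to be the key technical difficulty; its execution is the content of the Li--Yau argument.
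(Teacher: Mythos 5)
The paper does not prove this theorem: it is quoted from Li--Yau and used as a black box in the remark after Theorem 5.1, so there is no in-paper argument to compare against. Judged on its own, your proposal has the right setup and correct local observations, but the step that actually carries the theorem --- passing from a local maximum of $\Phi$ at $a=0$ to the global bound $\Phi(a)\leq\Vol(M)$ on all of $B^{N+1}$ --- is not supplied, and the tools you gesture at are not available.

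What you do have is sound. The factorization $s=R\circ F_a$ and the reduction to $\Phi(a)=\int_M\sigma_a(\phi)^n\,dV_g$ with $\sigma_a(y)=(1-|a|^2)/|y-a|^2$ is correct. Takahashi's identity $\Delta_g\phi=-n\phi$ gives $\int_M\phi\,dV_g=0$, hence $\nabla\Phi(0)=0$. The Hessian claim you leave as a ``should'' does in fact hold: expanding in $t$ with $|v|=1$ and $u=\langle\phi,v\rangle$ one gets $\Phi(tv)=\Vol(M)+2n t^2\bigl[(n+1)\int_M u^2\,dV_g-\Vol(M)\bigr]+O(t^3)$; since $\phi(x)$ is a unit normal to $\phi(M)$ at $\phi(x)$, one has the pointwise bound $|\nabla u|^2\leq 1-u^2$, which together with $\int_M|\nabla u|^2=n\int_M u^2$ forces $\int_M u^2\leq\Vol(M)/(n+1)$, so the quadratic term is $\leq 0$. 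Note this uses (i) but not (iii); the role of (iii) is to guarantee $\overline{H}$ has no nonzero fixed vector in $\mathbb{R}^{N+1}$, so that $a=0$ is the unique $\overline{H}$-fixed point of the $\overline{H}$-invariant function $\Phi$.

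The genuine gap is the local-to-global step, and it is the entire content of the theorem. Your proposal to extract it from concavity or subharmonicity of $\Phi$ or $\log\Phi$ does not look viable: for fixed $y\in S^N$ the integrand $\sigma_a(y)^n$ has no definite concavity and no sign for its Laplacian in $a$ on $B^{N+1}$, and integrating over $M$ does not create one. The compactness analysis at $\partial B^{N+1}$ is also subtler than the sketch suggests: as $a\to q\in\phi(M)$ one finds $\Phi(a)\to\Vol(S^n)$, and the needed inequality $\Vol(S^n)\leq\Vol(M)$ is itself a nontrivial consequence of the theorem being proved (via $V_c(N,\phi)\geq\Vol(S^n)$), so invoking it would be circular. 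You acknowledge that executing this step ``is the content of the Li--Yau argument,'' which is precisely the issue: what you have is a correct infinitesimal analysis at the origin, not a proof, and the global mechanism that Li and Yau actually supply (a direct monotonicity/first-variation analysis of $\Vol(M,(F_{ta}\circ\phi)^*g_0)$ along dilation rays, driven by minimality and the balancing condition) is not a convexity or maximum-principle shortcut that can be filled in generically.
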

	In particular, when $M$ is an irreducible homogeneous manifold, a theorem of Takahashi (see \cite{Lawson}) says that one can minimally immerse $M$ isometrically into $S^N\subseteq \mathbb{R}^{N+1}$ by its first eigenspace of $M$. If we denote this isometric immersion $M\rightarrow S^N$ by $\phi$, then $V_c(N,\phi)=\Vol(M)$ and \eqref{eigenvalue closed manifold} writes into
	\begin{equation*}
	\widetilde{\lambda}_k \int_M \mu^{\frac{4}{n-2}} dV_{g}\leq C(n) \, \left(\Vol(M,g) \, k^{2/n}+\sup_M R_g\right).
	\end{equation*}
\end{rema}

\subsection{Hersch Type Results}
Let $(M,g)$ be a closed Riemannian manifold. For any metric $\widetilde{g}\in [g]$, related by $\widetilde{g}=\mu^{4/(n-2)}g$, we define the functional 
\begin{equation}
	\bar{\lambda}_k(M,g,\widetilde{g})=\left(\lambda_k(\widetilde g)\cdot \int_M \mu^{\frac{4}{n-2}}dV_g\right).
\end{equation}
And we also define the supreme over the conformal class as   
\begin{equation}
	\Lambda_k (M, g)=\sup_{\widetilde{g}\in [g]} \bar{\lambda}_k(M,g,\widetilde{g}).
\end{equation}
It is natural to ask what $\Lambda_k(M, g)$ is and whether $\Lambda_k(M, g)$ is achieved by certain Riemannian metric in general. If the maximal metric exist, it is defined up to multiplication by a positive constant due to the rescaling invariance of the functional. We can prove the following results in this direction.
\begin{theorem}
	For the sphere with the standard round metric $(S^n, g_0)$, 
	\begin{equation*}
		\Lambda_0(S^n, g_0)=\bar{\lambda}_0(S^n, g_0,g_0).
	\end{equation*}
	And $\Lambda_0(S^n, g_0)=\bar{\lambda}_0(S^n, g_0,\widetilde{g})$ holds only when $\widetilde{g}$ is the round metric up to scaling.
\end{theorem}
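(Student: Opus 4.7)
The plan is to combine the conformal transformation law for the conformal Laplacian with the min--max characterization of $\lambda_0$, using the constant function as a test function to realize the upper bound. By the identity \eqref{integral under conformal change} applied with $u=v/\mu$, the Rayleigh quotient of $\Box_{\widetilde g}$ rewrites on $(S^n,g_0)$ as
\begin{equation*}
\mathcal{R}(v/\mu)=\frac{\int_{S^n}|\nabla v|_{g_0}^2+c_nR_{g_0}\,v^2\,dV_{g_0}}{\int_{S^n}v^2\,\mu^{4/(n-2)}\,dV_{g_0}}
\end{equation*}
for every nontrivial Lipschitz $v$, and the variational principle gives $\lambda_0(\widetilde g)\le \mathcal{R}(v/\mu)$.

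For the upper bound I plug in $v\equiv 1$. Since $R_{g_0}\equiv n(n-1)$ so that $c_nR_{g_0}=n(n-2)/4$, the numerator becomes $\frac{n(n-2)}{4}\Vol(S^n,g_0)$ and the denominator is $\int_{S^n}\mu^{4/(n-2)}\,dV_{g_0}$. Multiplying through by the denominator,
\begin{equation*}
\bar\lambda_0(S^n,g_0,\widetilde g)=\lambda_0(\widetilde g)\int_{S^n}\mu^{4/(n-2)}\,dV_{g_0}\;\le\;\frac{n(n-2)}{4}\,\Vol(S^n,g_0).
\end{equation*}
For the round metric itself, $\mu\equiv 1$ and the first eigenfunction of $\Box_{g_0}$ is the constant function with eigenvalue $\lambda_0(g_0)=c_nR_{g_0}=n(n-2)/4$, so the right-hand side is precisely $\bar\lambda_0(S^n,g_0,g_0)$, proving the inequality $\bar\lambda_0(S^n,g_0,\widetilde g)\le\bar\lambda_0(S^n,g_0,g_0)$.

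For the rigidity statement, suppose $\bar\lambda_0(S^n,g_0,\widetilde g)=\bar\lambda_0(S^n,g_0,g_0)$. Then $\mathcal{R}(1/\mu)=\lambda_0(\widetilde g)$, so $v\equiv 1$ is a minimizer of the reformulated weighted Rayleigh quotient and must therefore satisfy the Euler--Lagrange equation
\begin{equation*}
\Box_{g_0}v=\lambda_0(\widetilde g)\,\mu^{4/(n-2)}\,v.
\end{equation*}
Substituting $v\equiv 1$ and using $\Box_{g_0}(1)=c_nR_{g_0}=n(n-2)/4$ forces $\mu^{4/(n-2)}$ to equal the positive constant $\tfrac{n(n-2)}{4\lambda_0(\widetilde g)}$, so $\mu$ is constant and $\widetilde g$ is a constant positive multiple of $g_0$. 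Conversely, a direct scaling check shows that under $\widetilde g=c\,g_0$ with $c>0$ constant one has $\lambda_0(\widetilde g)=c^{-1}\lambda_0(g_0)$ while $\int\mu^{4/(n-2)}dV_{g_0}=c\,\Vol(S^n,g_0)$, so $\bar\lambda_0$ is preserved.

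There is no serious obstacle, only the mild point of confirming that equality in the Rayleigh bound genuinely promotes $v\equiv 1$ to a weak solution of the weighted eigenvalue equation, which is standard since any minimizer of a Rayleigh-type quotient satisfies its Euler--Lagrange equation and positivity of $1$ is consistent with its role as a principal eigenfunction of the weighted elliptic problem.
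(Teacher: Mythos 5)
Your proposal is correct and follows essentially the same route as the paper: both use $1/\mu$ as the test function, both invoke the conformal transformation identity \eqref{integral under conformal change} to rewrite the Rayleigh quotient on the round sphere, and both reduce the rigidity to the fact that equality forces $c_nR_{g_0}=\lambda_0(\widetilde g)\,\mu^{4/(n-2)}$, hence $\mu$ constant (the paper extracts this directly from \eqref{laplace under conformal change} applied to the eigenfunction $1/\mu$, while you obtain the identical pointwise relation as the Euler--Lagrange equation of the weighted Rayleigh quotient --- these are the same computation).
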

\begin{proof}
	For any $\widetilde{g}\in [g_0]$ related as $\widetilde{g}=\mu^{4/(n-2)}g_0$, take $1/\mu$ as the test function for the Rayleigh quotient and we obtain
	\begin{align}\label{equality holds}
		\lambda_0(S^n,\widetilde{g})\leq \frac{\int_{S^n} \left|\nabla_{\widetilde{g}} \left(\frac{1}{\mu}\right)\right|^2_{\widetilde{g}}+c_nR_{\widetilde{g}} \left|\frac{1}{\mu}\right|^2 dV_{\widetilde{g}}}{\int_{S^n} \left|\frac{1}{\mu}\right|^2dV_{\widetilde{g}}}.
	\end{align}
	By using \eqref{integral under conformal change}, the above inequality writes into
	\begin{align*}
		\lambda_0(S^n,\widetilde{g})\leq
		\frac{\int_{S^n}c_n R_{g_0}dV_{g_0}}{\int_{S^n} \mu^{4/(n-2)}dV_{g_0}}=\frac{\bar{\lambda}_0(S^n,g_0,g_0)}{\int_{S^n} \mu^{4/(n-2)}dV_{g_0}}.
	\end{align*}
	If \eqref{equality holds} holds with equality, then $1/\mu$ is an eigenfunction with eigenvalue $\lambda_0(S^n, \widetilde{g})$. By \eqref{laplace under conformal change}, we have
	\begin{eqnarray*}
		\lambda_0(S^n,g_0)=(-\Delta_{g_0}+c_nR_{g_0})1=\mu^{(n+2)/(n-2)}\, (-\Delta_{\widetilde g}+c_nR_{\widetilde g})(1/\mu)\\
		=\lambda_0(S^n,\widetilde{g})\mu^{4/(n-2)}.
	\end{eqnarray*}
	Therefore, $\mu$ is a constant function and the result follows.
\end{proof}

\textbf{Acknowledgements.}
The second author thanks Professor Hamid Hezari and Professor Zhiqin Lu for many useful discussions on this topic. The second author would also like to thank Professor Bernard Shiffman for his constant support and mentoring.

\bibliographystyle{plain} 
\bibliography{references}

\def\cprime{$'$}
\begin{thebibliography}{10}

\bibitem{conformaleigenvalueunbounded}
Bernd Ammann and Pierre Jammes.
\newblock The supremum of first eigenvalues of conformally covariant operators
  in a conformal class.
\newblock In {\em Variational problems in differential geometry}, volume 394 of
  {\em London Math. Soc. Lecture Note Ser.}, pages 1--23. Cambridge Univ.
  Press, Cambridge, 2012.

\bibitem{annuluscovering}
A.~Grigor\cprime~yan and S.-T. Yau.
\newblock Decomposition of a metric space by capacitors.
\newblock In {\em Differential equations: {L}a {P}ietra 1996 ({F}lorence)},
  volume~65 of {\em Proc. Sympos. Pure Math.}, pages 39--75. Amer. Math. Soc.,
  Providence, RI, 1999.

\bibitem{KNPP}
M.~Karpukhin, N.~Nadirashvili, A.~Penskoi, and I.~Polterovich.
\newblock An isoperimetric inequality for laplace eigenvalues on the sphere.
\newblock {\em to appear in JDG, https://arxiv.org/abs/1706.05713}, 2019.

\bibitem{KS}
M.~Karpukhin and D.~Stern.
\newblock Min-max harmonic maps and a new characterization of conformal
  eigenvalues.
\newblock {\em thttps://arxiv.org/abs/2004.04086}, 2020.

\bibitem{KokarevConformal}
G.~{Kokarev}.
\newblock {Conformal volume and eigenvalue problems}.
\newblock {\em ArXiv e-prints}, December 2017.

\bibitem{KokarevKahler}
G.~{Kokarev}.
\newblock {Bounds for Laplace eigenvalues of Kaehler metrics}.
\newblock {\em ArXiv e-prints}, January 2018.

\bibitem{Korevaarupperbounds}
Nicholas Korevaar.
\newblock Upper bounds for eigenvalues of conformal metrics.
\newblock {\em J. Differential Geom.}, 37(1):73--93, 1993.

\bibitem{Lawson}
H.~Blaine Lawson, Jr.
\newblock {\em Lectures on minimal submanifolds. {V}ol. {I}}, volume~9 of {\em
  Mathematics Lecture Series}.
\newblock Publish or Perish, Inc., Wilmington, Del., second edition, 1980.

\bibitem{LiYau}
Peter Li and Shing~Tung Yau.
\newblock A new conformal invariant and its applications to the {W}illmore
  conjecture and the first eigenvalue of compact surfaces.
\newblock {\em Invent. Math.}, 69(2):269--291, 1982.

\bibitem{nadir}
N.~Nadirashvili.
\newblock Berger's isoperimetric problem and minimal immersions of surfaces.
\newblock {\em Geom. Funct. Anal.}, 6(5):877--897, 1996.

\bibitem{N1}
Nikolai Nadirashvili and Yannick Sire.
\newblock Conformal spectrum and harmonic maps.
\newblock {\em Mosc. Math. J.}, 15(1):123--140, 182, 2015.

\bibitem{N2}
Nikolai Nadirashvili and Yannick Sire.
\newblock Maximization of higher order eigenvalues and applications.
\newblock {\em Mosc. Math. J.}, 15(4):767--775, 2015.

\bibitem{N3}
Nikolai Nadirashvili and Yannick Sire.
\newblock Isoperimetric inequality for the third eigenvalue of the
  {L}aplace-{B}eltrami operator on {$\Bbb{S}^2$}.
\newblock {\em J. Differential Geom.}, 107(3):561--571, 2017.

\bibitem{petrides1}
Romain Petrides.
\newblock Existence and regularity of maximal metrics for the first {L}aplace
  eigenvalue on surfaces.
\newblock {\em Geom. Funct. Anal.}, 24(4):1336--1376, 2014.

\bibitem{petrides2}
Romain Petrides.
\newblock On the existence of metrics which maximize {L}aplace eigenvalues on
  surfaces.
\newblock {\em Int. Math. Res. Not. IMRN}, (14):4261--4355, 2018.

\bibitem{SX}
Yannick Sire and Hang Xu.
\newblock Existence of extremal metrics in higher dimension.
\newblock {\em In preparation}, 2020.

\bibitem{urakawa}
Hajime Urakawa.
\newblock On the least positive eigenvalue of the {L}aplacian for {R}iemannian
  manifolds.
\newblock {\em Proc. Japan Acad. Ser. A Math. Sci.}, 53(7):229--231, 1977.

\bibitem{YangYau}
Paul~C. Yang and Shing~Tung Yau.
\newblock Eigenvalues of the {L}aplacian of compact {R}iemann surfaces and
  minimal submanifolds.
\newblock {\em Ann. Scuola Norm. Sup. Pisa Cl. Sci. (4)}, 7(1):55--63, 1980.

\end{thebibliography}

\end{document}